\theoremstyle{plain}
\newtheorem{theorem}{Theorem}[section]
\newtheorem{corollary}[theorem]{Corollary}
\theoremstyle{definition}
\theoremstyle{remark}
\newtheorem{remark}{Remark}
\DeclareMathOperator{\rank}{rank} 
\definecolor{brightpink}{rgb}{1.0, 0.0, 0.5}
\newcommand{\ngi}[1]{{{\color{black} #1}}}
\newcommand{\vlepi}[1]{{{\color{black} #1}}}
\newcommand{\revise}[1]{{{\color{black} #1}}}
\begin{document}


\title{\ngi{Conic-Optimization Based Algorithms for \\ 
 Nonnegative Matrix Factorization}}

\author{
\name{
Valentin Leplat\textsuperscript{a}\thanks{VL acknowledges the support by the European Research Council (ERC Advanced Grant no 788368) and the support by Ministry of Science and Higher Education grant No. 075-10-2021-068. Email: V.Leplat@skoltech.ru}, 
Yurii Nesterov\textsuperscript{b}, 
Nicolas Gillis\textsuperscript{c}\thanks{NG acknowledges the support by the Fonds de la Recherche Scientifique - FNRS and the Fonds Wetenschappelijk Onderzoek - Vlanderen (FWO) under EOS Project no O005318F-RG47, by the European Research Council (ERC Starting Grant no 679515), and by the Francqui Foundation. Email: nicolas.gillis@umons.ac.be},  
François Glineur\textsuperscript{b}\thanks{YN and FG acknowledge support from the European Research Council (ERC) under the European Union’s Horizon 2020 research and innovation program (grant agreement No. 788368). FG also acknowledges support from EOS project no O005318F-RG47. Emails: \{yurii.nesterov,francois.glineur\}@uclouvain.be.}
}
\affil{\textsuperscript{a}Center for Artificial Intelligence Technology, Skoltech, Bolshoy Boulevard 30, bld. 1, Moscow, Russia 121205; \textsuperscript{b}CORE and ICTEAM/Mathematical Engineering (INMA), UCLouvain, Avenue Georges Lemaître 4, B-1348 Louvain-la-Neuve, Belgium ; \textsuperscript{c}Department of Mathematics and Operational Research,
Facult\'e Polytechnique, Universit\'e de Mons,
Rue de Houdain 9, 7000 Mons, Belgium}
}

\maketitle

\begin{abstract}
\revise{Nonnegative matrix factorization} is the following problem: given a nonnegative input  matrix $V$  and a factorization rank $K$,  compute two nonnegative matrices, $W$ with $K$ columns and $H$ with $K$ rows, such that $WH$ approximates $V$ as well as possible. In this paper, we propose two new approaches for computing high-quality NMF solutions  using conic optimization. These approaches rely on the same two steps. First, we reformulate NMF as minimizing a concave function over a product of convex cones--one approach is based on the exponential cone, and the other on the second-order cone. Then, we solve these reformulations iteratively: at each step, we minimize exactly, over the feasible set, a majorization of the objective functions obtained via linearization at the current iterate. Hence these subproblems are convex conic programs and can be solved efficiently using dedicated algorithms. We prove that our approaches reach a stationary point with an accuracy decreasing as  $\mathcal{O}(\frac{1}{i})$, where $i$ denotes the iteration number. To the best of our knowledge, our analysis is the first to provide a convergence rate to stationary points for NMF. Furthermore, in the particular cases of rank-one factorizations (that is, $K=1$), we show that one of our formulations can be expressed as a convex optimization problem implying that optimal rank-one approximations can be computed efficiently. Finally, we show on several numerical examples that our approaches are able to frequently compute exact NMFs (that is, with $V = WH$), and compete favorably with the state of the art. 
\end{abstract}

\begin{keywords}
nonnegative matrix factorization, 
nonnegative rank, 
exponential cone, 
second-order cone, concave minimization, 
conic optimization, 
Frank-Wolfe gap, 
convergence to stationary points
\end{keywords}

\section{Introduction}

Nonnegative matrix factorization (NMF) 
is the problem of approximating a given nonnegative matrix, $V \in \mathbb{R}_+^{F \times N}$, as the product of two smaller nonnegative matrices, $W \in \mathbb{R}_+^{F \times K}$ and $H \in \mathbb{R}_+^{K \times N}$, where $K$ is a given parameter known as the factorization rank. One aims at finding the best approximation, that is, the one that minimizes the discrepancy between $V$ and the product $WH$, often measured by the Frobenius norm of their difference, $\left\| V-WH \right\|_F$.  
Despite the fact that NMF is NP-hard in general~\cite{vavasis2009complexity} (see also~\cite{doi:10.1137/16M1080999}), it has been used successfully in many domains such as probability, geoscience, medical imaging, computational geometry, combinatorial optimization, analytical chemistry, and machine learning; see~\cite{fu2019nonnegative, gillis2020bk} and the references therein. 
Many local optimization schemes have been developed to compute NMFs. They aim to identify local minima or stationary points of optimization problems that minimize the discrepancy between $V$ and the approximation $WH$. Most of these iterative algorithms rely on a two-block coordinate descent scheme that consists in (approximatively) optimizing alternatively over $W$ with $H$ fixed, and vice-versa; 
see~\cite{cichocki2009nonnegative, gillis2020bk} and the references therein.  
In this paper, we are interested in computing high-quality local minima for the NMF optimization problems without relying on the block coordinate descent (BCD) framework. We will perform the optimization over $W$ and $H$ jointly. 
Moreover, our focus is on finding exact NMFs, that is, computing nonnegative factors $W$
and $H$ such that $V=WH$, although our approaches can be used to find approximate NMFs as well.  

The minimum
factorization rank $K$ for which an exact NMF exists is called the nonnegative rank of $V$ and is
denoted $\rank_+(V)$, we have 
\begin{equation*}
\text{rank}_+(V)=\text{min}\Big\{  K \in \mathbb{N} \, \Big| \, 
		\text{there exist } W \in \mathbb{R}_+^{F \times K} \text{ and } H \in \mathbb{R}_+^{K \times N} \text{ such that } V = WH \Big\}.  
\end{equation*} 
The computation of the nonnegative rank is NP-hard~\cite{vavasis2009complexity},  and is a research topic on its own;  
see~\cite[Chapter 3]{gillis2020bk}\cite{Dewez2020, dewez2022computational} 
and the references therein for recent progress on this question.

\subsection{Computational complexity}

Solving exact NMF can be used to compute the nonnegative rank, by finding the smallest $K$ such that an exact NMF exists. 
Cohen and Rothblum~\cite{cohen1993nonnegative} give a super-exponential time algorithm for this problem. Vavasis~\cite{vavasis2009complexity} proved that checking whether $\rank(V)=\rank_+(V)$, where $\rank(V)=K$ is part of the input, is NP-hard. 
Since determining the nonnnegative rank is a generalization of exact NMF, the results in~\cite{vavasis2009complexity} imply that computing an exact NMF is also NP-hard. 
Similarly, the standard NMF problem using any norm is a generalization of exact NMF, and therefore any hardness result that applies to exact NMF also applies to most approximated NMF models~\cite{vavasis2009complexity}. 
Hence, 
unless \textbf{P}=\textbf{NP}, no algorithm can solve exact NMF using a number of arithmetic operations bounded by a polynomial in $K$ and in the size of $V$; see also~\cite{doi:10.1137/16M1080999} that gives a different proof using algebraic arguments. 

More recently, Arora et al.~\cite{doi:10.1137/130913869} showed that no algorithm to solve this problem can run in time $(FN)^{o(K)}$ unless\footnote{3-SAT, or 3-satisfiability, is an instrumental problem in computational complexity to prove NP-completeness results. 3-SAT is the problem of deciding whether a set of disjunctions containing 3 Boolean variables or their negation can be satisfied.} 3-SAT can be solved in time $2^{o(n)}$ on instances with $n$ variables. However, in practice, $K$ is small and it makes senses to wonder what is the complexity if $K$ is assumed to be a  fixed constant. In that case, they showed that exact NMF can be solved in polynomial time in $F$ and $N$, namely in time $\mathcal{O}((FN)^{c 2^K K^2})$ for some constant $c$, which Moitra~\cite{Moit13} later improved to  $\mathcal{O}((FN)^{cK^2})$.  

The argument is based on quantifier elimination theory, using the seminal result by Basu, Pollack and Roy~\cite{basu1996}. 
Unfortunately, this approach cannot be used in practice, even for small size matrices, because of its high computational cost: although the term $\mathcal{O}((FN)^{cK^2})$ is a polynomial in $F$ and $N$ for $K$ fixed, it grows extremely fast (and the hidden constants are usually very large). Let us illustrate with a 4-by-4 matrix with $K=3$,  we have a complexity of order $16^9 \approx 7 \ 10^{10}$ and for a 5-by-5 matrix with $K=4$, the complexity raises up to $25^{16} \approx 2 \ 10^{22}$. 
Therefore developing an effective computational technique for exact NMF of small matrices is an important direction of research. Some heuristics have been recently developed that allow solving exact NMF for matrices up to a few dozen rows and columns~\cite{vandaele2016}.

\subsection{Contribution and outline of the paper}

In this paper, we introduce two \revise{formulations} for computing an NMF using conic optimization. They rely on the same two steps. 
First, in Section~\ref{exactNMFcriteria}, we reformulate NMF as minimizing a concave function over a product of convex cones; one approach is based on the exponential cone \revise{and leads to under-approximations}, and the other on the second-order cone \revise{and leads to over-approximations}. For the latter formulation, in the  case of a rank-one factorization, we show that it can be cast as a convex optimization problem, leading to an efficient computation of the \revise{optimal rank-one over-approximation}. Then, in Section~\ref{sec6_algo}, we solve these reformulations iteratively: at each step, we minimize exactly over the feasible set a majorization of the objective functions  obtained via linearization at the current iterate. Hence these subproblems are convex conic programs and can be solved efficiently using dedicated algorithms. In Section~\ref{sec_convRes}, we show that our optimization scheme relying on successive linearizations is a special case of the Frank-Wolfe (FW) algorithm. By using an appropriate measure of stationarity, namely the FW gap, 
we show in Theorem~\ref{Theo1} 
that the minimal FW gap generated by our algorithm  converges as  $\mathcal{O}(\frac{1}{i})$, where $i$ is the iteration index.   
Finally, in Section~\ref{num_exp}, \revise{we use our approaches to  compute
exact NMFs, and show that they compete} favorably with the state of the art when applied to several classes of nonnegative matrices; namely, randomly generated, infinitesimally rigid and slack matrices.

\revise{
\begin{remark}[Focus on exact NMF]
Our two NMF formulations can be used to compute approximate factorizations (namely, under- and over-approximations). However, in this paper, we focus on exact NMF for the numerical experiments. 
The reason is twofold:  
\begin{enumerate}
    \item Exact NMF problems allow us to guarantee whether a globally optimal solution is reached, and hence compare algorithms in terms of global optimality. 
    
    \item Our current algorithms rely on interior-point methods that do not scale well. 
Therefore, they are significantly slower, at this point, than  state-of-the-art algorithms to compute approximate NMFs on large data sets (such as images or documents). Making our approach scalable is a topic of further research. 
Moreover, because of the under/over-approximations, the error obtained with the proposed algorithms would be larger, and the comparison would not be fair. Here is a simple example, 
let 
\[
V = \left( 
\begin{array}{cc}
    0 & 1 \\
    1 & 1
\end{array} \right), 
V_{\text{NMF}} = 
\left( 
\begin{array}{cc}
    0.45  &  0.72 \\ 
    0.72  &  1.17
\end{array} \right), 
V_{\text{U}} = 
\left( 
\begin{array}{cc}
    0  & 1 \\ 
    0  &  1
\end{array} \right), 
V_{\text{O}} = 
\left( 
\begin{array}{cc}
    1  & 1 \\ 
    1  &  1
\end{array} \right). 
\]
The best rank-one NMF of $V$ is $V_{\text{NMF}}$ (with two digits of accuracy) with Frobenius error 
$\| V -  V_{\text{NMF}} \|_F = 0.62$, while a best rank-one under-approximation (resp.\ over-approximation) of $V$ is $V_{\text{U}}$ (resp.\ $V_{\text{O}}$) with Frobenius error 1. (Note however that under-/over-approximations can have some useful properties in practice~\cite{gillis2010using, tepper2017nonnegative}.) 
\end{enumerate} 
\end{remark}
}


\section{\revise{NMF formulations} based on conic optimization} \label{exactNMFcriteria}

In this section we propose two new formulations for NMF, where the feasible set is represented using the exponential cone  (Section~\ref{sec:expon}) and the second-order cone (Section~\ref{sec:escondorder}).

\subsection{NMF formulation via exponential cones} \label{sec:expon}

Given a non-negative matrix $V \in \mathbb{R}_{+}^{F \times N}$ and a positive integer $K \ll \min(F,N)$, we want to compute an  NMF. Our first proposed formulation is the following: 
\begin{equation}\label{basicNMFoptiprob}
		\begin{aligned}
		& \underset{W\in \mathbb{R}^{F \times K}, 
		H\in \mathbb{R}^{K \times N}}{\text{max}}
		& & \sum_{f=1}^F \sum_{n=1}^N \left(\sum_{k=1}^K W_{fk}H_{kn} \right) \\
		& \text{subject to}
		& & \sum_{k=1}^K W_{fk}H_{kn}   \leq  V_{fn} \text{ for } f \in \mathcal{F}, n \in \mathcal{N}, \\
		& \text{}
		& & W_{fk} \geq 0, H_{kn} \geq 0 \text{ for } f \in \mathcal{F}, k \in \mathcal{K}, n \in \mathcal{N}.
		\end{aligned}
	\end{equation}
where $\mathcal{F}=\{1,...,F \}$, $\mathcal{N}=\{1,...,N \}$ and $\mathcal{K}=\{1,...,K \}$. 
Any feasible solution $(W,H)$ of~\eqref{basicNMFoptiprob}  provides an under-approximation of $V$, because of the elementwise constraint $WH \leq V$. 
The objective function of~\eqref{basicNMFoptiprob} maximizes the sum of the entries of $WH$. Therefore, if $V$ admits an exact NMF of size $K$, that is, $\rank_+(V) \leq K$, any optimal solution $(W^*,H^*)$ of~\eqref{basicNMFoptiprob} must satisfy $W^*H^*=V$, and hence will provide an exact NMF of $V$. 
Note that this problem is nonconvex because of the bilinear terms appearing in the objective and the constraint $WH \leq V$.

Let us now reformulate~\eqref{basicNMFoptiprob} using exponential cones. 
In order to deal with nonnegativity constraints on the entries of $W$ and $H$, we use the following change of variables: $W_{fk}=G(U_{fk})=e^{U_{fk}}$ and $H_{kn}=G(T_{kn})=e^{T_{kn}}$, where $U \in \mathbb{R}^{F \times K}$ and $T \in \mathbb{R}^{K \times N}$, with $f=1,\dots,F$, $n=1,\dots,N$ and $k=1,\dots,K$ and $G(t)=e^{t}$. 
By applying a logarithm on top of this change of variables to the objective function, and on both sides of the inequality constraints $WH \leq V$,   \eqref{basicNMFoptiprob} can be nearly equivalently rewritten as follows,  the difference being that zero elements in $W$ and $H$ are now excluded:  
\begin{equation}\label{optiprob1}
		\begin{aligned}
		& \underset{U\in \mathbb{R}^{F \times K},T\in \mathbb{R}^{K \times N}}{\text{max}}
		& & \text{log}\left(\sum_{f,n,k}  e^{U_{fk}+T_{kn}} \right) \\
		& \text{subject to}
		& & \text{log}\left( \sum_{k=1}^K e^{U_{fk}+T_{kn}} \right) \leq \text{log}\left( V_{fn} \right) \text{ for } f \in \mathcal{F}, n \in \mathcal{N},
		\end{aligned}
	\end{equation} 
which corresponds to the maximization of a convex function (logarithm of the sums of exponentials) over a convex set, each constraint being convex for the same reason. 

We rewrite the convex feasible set of \eqref{optiprob1}  
with explicit conic constraints as follows:
	\begin{equation}\label{logsummodel_setQ}
        \begin{aligned}
    		& \sum_{k=1}^{K} t_{fkn} \leq V_{fn} \text{ for } f \in \mathcal{F}, n \in \mathcal{N},\\
    		& \left( t_{fkn},1,U_{fk}+T_{kn}\right) \in K_{exp} \text{ for } f \in \mathcal{F}, k \in \mathcal{K}, n \in \mathcal{N},
        \end{aligned}
    \end{equation}
where $K_{exp} \subset  \mathbb{R}^3$ denotes the (primal) exponential cone defined as:
\begin{equation}\label{def:expoCo}
    \begin{aligned}
		& K_{exp} = \left\{ (x_1,x_2,x_3) \in \mathbb{R}^3 | x_1 \geq x_2 e^{\frac{x_3}{x_2}}, x_2 > 0   \right\} \cup 
		 \left\{ \left( x_1,0,x_3 \right) | x_1 \geq 0, x_3 \leq 0  \right\}.
    \end{aligned}
\end{equation}
Note that the exponential cone is closed and includes the subset $\left\{ \left( x_1,0,x_3 \right) | x_1 \geq 0, x_3 \leq 0  \right\}$, therefore the scenarios for which the entries $V_{fn}$ are equal to zero can be handled by exponential conic constraints, which was not possible with formulation \eqref{optiprob1} since the $\log$ function is not defined at zero. 
Hence the optimization problem \eqref{basicNMFoptiprob} can be written completely  equivalently as  
\begin{equation}\label{optiprob2}
		\begin{aligned}
		& \underset{U\in \mathbb{R}^{F \times K}, 
		T\in \mathbb{R}^{K \times N}, 
		t \in \mathbb{R}^{F \times K \times N}}{\text{max}}
		& & \text{log}\left(\sum_{f,n,k} e^{U_{fk}+T_{kn}} \right) \\
		& \text{subject to}
		& & \sum_{k=1}^{K} t_{fkn} \leq V_{fn} \text{ for } f \in \mathcal{F}, n \in \mathcal{N} ,\\
		& \text{}
		& & \left( t_{fkn},1,U_{fk}+T_{kn}\right) \in K_{exp} \text{ for } f \in \mathcal{F}, k \in \mathcal{K}, n \in \mathcal{N}.
		\end{aligned}
	\end{equation}
This leads to $F \times N$  inequality constraints and the introduction of $F \times K \times N$ exponential cones. 
In Section~\ref{sec6_algo}, we propose an algorithm to tackle~\eqref{optiprob2} using successive linearizations of the objective function.

\subsection{NMF formulation via rotated second-order cones} \label{sec:escondorder} 

Our second proposed \revise{NMF} formulation is the following: 
\begin{equation}\label{basicNMFoptiprob2}
		\begin{aligned}
		& \underset{W\in \mathbb{R}^{F \times K},H\in \mathbb{R}^{K \times N}}{\text{min}}
		& & \sum_{f=1}^F \sum_{n=1}^N \left(\sum_{k=1}^K W_{fk}H_{kn} \right) \\
		& \text{subject to}
		& & \sum_{k=1}^K W_{fk}H_{kn}   \geq  V_{fn} \text{ for } f \in \mathcal{F}, n \in \mathcal{N}, \\
		& \text{}
		& & W_{fk}, H_{kn} \geq 0 \text{ for } f \in \mathcal{F}, k \in \mathcal{K}, n \in \mathcal{N}.
		\end{aligned}
	\end{equation}
Any feasible solution $(W,H)$ of~\eqref{basicNMFoptiprob2}  provides an over-approximation of $V$, because of the constraint $WH \geq V$. 
The objective function of~\eqref{basicNMFoptiprob} minimizes the sum of the entries of $WH$. Therefore, if $\rank_+(V) \leq K$, any optimal solution $(W^*,H^*)$ of~\eqref{basicNMFoptiprob} must satisfy $W^*H^*=V$, and hence will provide an exact NMF of $V$. Again the problem is nonconvex due to the bilinear terms.

Let us use the following change of variables: we let  $W_{fk}=G(U_{fk})=\sqrt{U_{fk}}$ and $H_{kn}=G(T_{kn})=\sqrt{T_{kn}}$ 
where $U \in \mathbb{R}_+^{F \times K}$ and $T \in \mathbb{R}_+^{K \times N}$, with $f=1,\dots,F$, $n=1,\dots,N$ and $k=1,\dots,K$, this time with $G(t)=\sqrt{t}$. 
Thus the optimization problem \eqref{basicNMFoptiprob2} can be equivalently rewritten as:  
\begin{equation}\label{optiprob3}
		\begin{aligned}
		& \underset{U\in \mathbb{R}_+^{F \times K},T\in \mathbb{R}_+^{K \times N}}{\text{min}}
		& & \sum_{f=1}^F \sum_{n=1}^N \left( \sum_{k=1}^K \sqrt{U_{fk}}\sqrt{T_{kn}}\right) \\
		& \text{subject to}
		& & \sum_{k=1}^K \sqrt{U_{fk}}\sqrt{T_{kn}}  \geq  V_{fn}  \text{ for } f \in \mathcal{F}, n \in \mathcal{N},
		\end{aligned}
	\end{equation}
which minimizes a concave function over a convex set. Indeed, the function $\sqrt{xy}$ is concave. 

This set can be written with conic constraints as follows: 
	\begin{equation}\label{quadmodel_setQ}
        \begin{aligned}
    		& \sum_{k=1}^{K} t_{fkn} \geq V_{fn}, \text{ for } f \in \mathcal{F}, n \in \mathcal{N},\\
    		& \left( U_{fk},\frac{1}{2} T_{kn},t_{fkn}\right) \in \mathcal{Q}_r^{3}\text{ for } f \in \mathcal{F}, k \in \mathcal{K}, n \in \mathcal{N},
        \end{aligned}
    \end{equation}
where $\mathcal{Q}_r^{3}$ denotes the $3$-dimensional rotated second-order cone defined as: 
\begin{equation*}
    \begin{aligned}
		& \mathcal{Q}_{r}^{3} 
		= 
		\left\{ (x_1,x_2,x_3) \in \mathbb{R}^{3}\text{ }|\text{ }2 x_{1} x_{2} \geq x_{3}^{2}, x_1 \geq 0, x_2 \geq 0  \right\}.
    \end{aligned}
\end{equation*}

Thus, the optimization problem \eqref{optiprob3} becomes
\begin{equation}\label{optiprob4}
		\begin{aligned}
		& \underset{U\in \mathbb{R}_+^{F \times K},T\in \mathbb{R}_+^{K \times N},t \in \mathbb{R}^{F \times K \times N}}{\text{min}}
		& & \sum_{f=1}^F \sum_{n=1}^N \left( \sum_{k=1}^K \sqrt{U_{fk}}\sqrt{T_{kn}}\right) \\
		& \text{subject to}
		& & \sum_{k=1}^{K} t_{fkn} \geq V_{fn}  \text{ for } f \in \mathcal{F}, n \in \mathcal{N}, \\
		& \text{}
		& & \left( U_{fk},\frac{1}{2} T_{kn},t_{fkn}\right) \in \mathcal{Q}_r^{3}\text{ for } f \in \mathcal{F}, k \in \mathcal{K}, n \in \mathcal{N}, 
		\end{aligned}
	\end{equation}
which leads to $F \times N$  inequality constraints and the introduction of of $F \times K \times N$ rotated quadratic cones. 
In section~\ref{sec6_algo}, we present an algorithm to tackle \eqref{optiprob2} and \eqref{optiprob4}.

\subsubsection{Rank-one Nonnegative Matrix Over-approximation}\label{subsec_rank1NMO}

\vlepi{In this section, we show that our over-approximation formulation \eqref{basicNMFoptiprob2} can be expressed as a convex optimization problem in the case of a rank-one factorization (that is, $K=1$). 
Hence we will be able to compute an optimal rank-one nonnegative matrix over-approximation (NMO). 
For $K=1$, \eqref{basicNMFoptiprob2} becomes: 
\begin{equation}\label{rank1NMO1}
		\begin{aligned}
		& \underset{w\in \mathbb{R}^{F},h\in \mathbb{R}^{N}}{\text{min}}
		& & \sum_{f=1}^F \sum_{n=1}^N  w_{f}h_{n}  \\
		& \text{subject to}
		& & w_{f}h_{n}  \geq  V_{fn} \text{ for } f \in \mathcal{F}, n \in \mathcal{N}, \\
		& \text{}
		& & w_{f}, h_{n} \geq 0 \text{ for } f \in \mathcal{F}, n \in \mathcal{N}.
		\end{aligned}
	\end{equation}
Any feasible solution $(w,h)$ of~\eqref{rank1NMO1}  provides a rank-one NMO of $V$, because of the constraints. 
The objective function of~\eqref{rank1NMO1} minimizes the sum of the entries of $wh^\top$, which is equal to 
$\langle wh^\top,e e^\top \rangle  
= \langle w,e \rangle \ 
\langle h,e \rangle$, 
where $e$ denotes the all-one factor of appropriate dimension. 
Since any solution $wh^\top$ can be rescaled as $(\lambda w)(h^\top /\lambda)$ for any $\lambda > 0$, we can assume without loss of generality (w.l.o.g.) that $\langle w,e \rangle = 1$, and hence 
\eqref{rank1NMO1} can be equivalently written as follows: 
\begin{equation}\label{rank1NMO2}
		\begin{aligned}
		& \underset{w\in \mathbb{R}^{F},h\in \mathbb{R}^{N}}{\text{min}}
		& & \langle h,e \rangle  \\
		& \text{subject to}
		& & \langle w,e \rangle = 1, \\
		& \text{}
		& & w_{f}h_{n}  \geq  V_{fn} \text{ for } f \in \mathcal{F}, n \in \mathcal{N}, \\
		& \text{}
		& & w_{f}, h_{n} \geq 0 \text{ for } 
		f \in \mathcal{F}, n \in \mathcal{N}. 
		\end{aligned}
	\end{equation}
 Then, by letting each $h_n$ take the minimal value allowed by the constraints, that is, $h_n = \text{max}_{f \in \mathcal{F}} V_{fn}/w_f$ for each $n \in \mathcal{N}$, and replacing $w_f$ by its inverse, $u_f=1/w_f$ for each $f \in \mathcal{F}$,
 \eqref{rank1NMO2} becomes: 
\begin{equation}\label{rank1NMO3}
		\begin{aligned}
		& \underset{u\in \mathbb{R}^{F}}{\text{min}}
		& & \sum_n \text{max}_f \left(u_f V_{fn}  \right)  \\
		& \text{subject to}
		& & \sum_f 1/u_f \leq 1, \; u_{f} \geq 0 \text{ for } f \in \mathcal{F}. 
		\end{aligned}
	\end{equation} 
The feasible set of \eqref{rank1NMO3} can be formulated by using various conic constraints:
\begin{itemize}
    \item a semi-definite programming formulation: introduce variables $y_f$ such that $u_f y_f \geq 1, \sum_f y_f \leq 1$ to obtain 
    \begin{equation*}
    \begin{aligned}
                & \left(
                    \begin{array}{cc}
                        u_f &    1 \\
                        1 &    y_f  
                    \end{array} 
                 \right) \in \mathbb{S}^{2}_+ \text{ for } f \in \mathcal{F}, \; \sum_f y_f \leq 1, 
    \end{aligned}
    \end{equation*}
    where $\mathbb{S}^{2}_+$ denotes the set of positive semi-definite matrices of dimension 2.
    \item a power-cone formulation:  for $p<0$ the function $g(x)=x^{p}$ is convex for $x >0$ and the inequality $z \geq x^{p}$ is equivalent to $z^{1/(1-p)}x^{-p/(1-p)} \geq 1 \iff (z,x,1) \in P^{1/(1-p)}$ where $P^\alpha = \{ (x,z,a) \ | \ z^\alpha x^{1-\alpha} \geq a\}$ is a power cone. In our case, by introducing $y_f \geq u_f^{-1}$, we obtain  
    \begin{equation*}
        \begin{aligned}
      &(y_f,u_f,1) \in P^{1/2} \text{ for } f \in \mathcal{F}, \; \sum_f y_f \leq 1. 
        \end{aligned}
    \end{equation*}
    \item a (rotated) quadratic formulation:  introducing variables $y_f$ such that $y_f \geq 1/u_f$ for $u_f \geq 0$, this can be formulated  as follows: $(u_f,y_f,\sqrt{2}) \in Q_r^{3}$ where $Q_r^{3}$ denotes the set of rotated quadratic cones of dimension 3. We then have :  
    \begin{equation*}
        \begin{aligned}
                     &(u_f,y_f,\sqrt{2}) \in Q_r^{3} \text{ for } f \in \mathcal{F}, \ \; \sum_f y_f \leq 1 . 
        \end{aligned}
    \end{equation*}
\end{itemize}
In this paper, we consider the (rotated) quadratic formulation which is the easiest to implement in the MOSEK software \cite{mosek}. \\

Further, the objective function of~\eqref{rank1NMO3} is a sum of convex piece-wise linear functions. Hence by posing $t_n \geq \text{max}_f \left(u_f V_{fn}  \right)$, \eqref{rank1NMO3} can equivalently be formulated as follows:
 \begin{equation}\label{rank1NMO4}
		\begin{aligned}
		& \underset{t \in \mathbb{R}^{N}, u,y\in \mathbb{R}^{F}}{\text{min}}
		& & \sum_n t_n  \\
		& \text{subject to}
		& & \sum_f y_f \leq 1, \\
		& \text{}
		& & (u_f,y_f,\sqrt{2}) \in Q_r^{3} \text{ for } f \in \mathcal{F}, \\
		& \text{}
		& & t_n \geq u_f V_{fn} \text{ for } f \in \mathcal{F}, 
		n \in \mathcal{N}, 
		\end{aligned}
	\end{equation} 
which involves $2F+N$ variables and $F(1+N)+1$ constraints. 
This problem can be solved to optimality and efficiently with an interior-point method (IPM), as available for example in MOSEK~\cite{mosek}. \\ 
}

\ngi{
For the formulation based on exponential cones, \cite{NGillisMscThesis} showed that the rank-one underapproximation for positive input matrices can be expressed as the dual of an optimal transportation problem, and hence can also be solved optimally and efficiently with polynomial-time methods~\cite{SHARMA2000611}.
} 

\section{A successive linearization algorithm}\label{sec6_algo}

In this section, we present an iterative algorithm to tackle problems \eqref{optiprob2} and \eqref{optiprob4}. Both problems can be written as the minimization of a concave function $\Phi$ over a convex set denoted by $Q$. Note that $Q$ designates either the feasible set of \eqref{optiprob2} or the feasible set of \eqref{optiprob4}. 
 We perform this minimization by solving  a sequence of simpler problems in which the objective function is replaced by its linearization constructed at the current solution $(U,T)$. Let us denote $Z^{(i)}=(U^{(i)},T^{(i)})$ the $i$th iterate of our algorithm. 
 At each iteration $i$, we update $Z$ as follows: 
\begin{equation}\label{subproblm_exacNMF}
\begin{aligned}
    Z^{(i)} & \in \underset{Z \in Q}{\text{argmin }} \Phi(Z^{(i-1)}) + \langle \nabla \Phi(Z^{(i-1)}), Z-Z^{(i-1)}\rangle \\
          & \in \underset{Z \in Q}{\text{argmin }} \langle \nabla \Phi(Z^{(i-1)}), Z\rangle, 
\end{aligned}
\end{equation}  
where $\Phi$ is the objective function of \eqref{optiprob2} or  \eqref{optiprob4}. 
Since the objective of \eqref{subproblm_exacNMF} is linear in $Z$,  the subproblems become convex. Moreover they are particular structured conic optimization problems. 
In this paper, we use the MOSEK software \cite{mosek} to solve each successive problem \eqref{subproblm_exacNMF} with an IPM.  
Algorithm~\ref{SCCAE-NMF} summarizes our proposed method to tackle   \eqref{optiprob2} and \eqref{optiprob4}.   

To initialize $U$ and $T$, we chose to randomly initialize $W$ and $H$ (using the uniform distribution in the interval $[0,1]$ for each entry of $W$ and $H$) and apply the two changes of variables, $G(.)$, to compute the initializations for $U$ and $T$.
    
In this paper, we use a tolerance for the relative error equal to $10^{-6}$, that is, 
we assume that an exact NMF $(W,H)$ is found for an input matrix $V$ as soon as $\frac{\left\|V-WH \right\|_F}{\left\| V \right\|_F} \leq 10^{-6}$, as done in~\cite{vandaele2016}.  

\vlepi{
The main algorithm integrates a procedure that automatically updates the optimization problems in the case subsets of entries of the solution tend to zero. Indeed, due to numerical limitations of the solver, the required level of accuracy cannot be reached in some numerical tests even if the solution is close to convergence. This procedure is referred to as Sparsity Patterns Integration (SPI) and is detailed in Appendix~\ref{appB}. Note that the update of the optimization problem is computationally costly, in particular the update of the matrix of coefficients defining the constraints. Hence, SPI is triggered twice; at 80\% and 95\% of the maximum number of iterations. This practical choice has been motivated by numerical experiments that showed that two activations in the final iterations are sufficient to reach the tolerance error when the current solution is close enough to a high-accuracy local optimum. 
However, for exponential cones, in some numerical tests, the relative error can be stuck in the interval $[10^{-4},10^{-5}]$. In this context only, a final refinement step further improves the output of the main algorithm using the state-of-the-art accelerated HALS algorithm, an exact BCD method for NMF, from~\cite{6797154} to go below the $10^{-6}$ tolerance.
}

\algsetup{indent=2em}
\begin{algorithm}[ht!]
\caption{Successive  Conic Convex Approximation for Exact NMF  \label{SCCAE-NMF}}
\begin{algorithmic}[1] 
\REQUIRE Input matrix $V \in \mathbb{R}^{F \times N}_+$, the factorization rank $K$, 
 number of iterations \textit{maxiter}, \revise{choose the formulation~\eqref{optiprob2} (exponential cones) or~\eqref{optiprob4} (second-order cones).}  
\ENSURE $(W,H) \geq 0$ \revise{such that 
$V \approx WH$,
and 
$V \leq WH$ for~\eqref{optiprob2} (under-approximation) 
or  
$V \geq WH$ for~\eqref{optiprob4}  (over-approximation).} 

    \medskip 
    
\STATE \emph{\% Block 1: Initialization}
\STATE $(W^{(0)},H^{(0)})\longleftarrow$ positive random initialization$(F,K,N)$. 
\STATE $(U^{(0)},T^{(0)}) \longleftarrow G^{-1}(W^{(0)},H^{(0)})$  \text{ where } $G$ is the change of variables 
\STATE $Z^{(0)} \longleftarrow (U^{(0)},T^{(0)})$
\STATE \emph{\% Block 2: iterative update of $Z$}
\FOR {$i=1,2,\dots,$ maxiter}
    \STATE $Z^{(i)} \longleftarrow \underset{Z \in Q}{\text{argmin }} \langle \nabla \Phi(Z^{(i-1)}), Z\rangle$ with IPMs available in MOSEK \cite{mosek} 
\ENDFOR
\STATE $(W,H) \longleftarrow G(Z^{(i)})$

\end{algorithmic}
\end{algorithm}   
\vlepi{In Section~\ref{sec_convRes} 
, we discuss the convergence guarantees for Algorithm~\ref{SCCAE-NMF}.

\section{Convergence results}\label{sec_convRes}

Let us focus on the following optimization problem 
\begin{equation}\label{opt_prob}
		\begin{aligned}
		& \underset{Z \in Q}{\text{min}}
		& & \Phi\left(Z\right), 
		\end{aligned}
	\end{equation}
where $\Phi$ is a concave continuously differentiable function over the domain $Q$ which is assumed to be convex and compact. 
Let us first describe the convergence of the sequence of objective function values $\{\Phi(Z^{(i)}) \}$ obtained with  Algorithm~\ref{SCCAE-NMF}. Since $\Phi(Z)$ is concave, its linearization around the current iterate \ngi{$Z^{(i)}$}  provides an upper approximation, that is, 
\begin{equation}\label{opti_concavfun}
     \Phi(Z) \leq \Phi(Z^{(i)})+\langle \nabla \Phi(Z^{(i)}), Z-Z^{(i)}\rangle 
     \; \text{ for all } \; Z \in Q.
\end{equation}
This upper bound is tight at the current iterate and is exactly minimized over the feasible set $Q$ at each iteration. Hence Algorithm~\ref{SCCAE-NMF} is a majorization-minimization algorithm.
This implies that $\Phi(Z)$ is nonincreasing under the updates of Algorithm~\ref{SCCAE-NMF} and since $\Phi(Z)$ is bounded below on $Q$, by construction, the sequence of objective function values $\{ \Phi(Z^{(i)}) \}$ converges. 

We now focus on the convergence analysis of the sequence of iterates $\{ Z^{(i)} \}$ generated by Algorithm~\ref{SCCAE-NMF}, in particular convergence to a stationary point. To achieve this goal, we first recall some basics about the Frank-Wolfe (FW) algorithm. The FW algorithm \cite{RePEc:wly:navlog:v:3:y:1956:i:1-2:p:95-110} is a popular first-order method to solve \eqref{opt_prob} that relies on the ability to compute efficiently the so-called Linear Minimization Oracle (LMO), 
that is, $LMO(D):=\underset{Z \in Q}{\text{argmin }} \langle D , Z \rangle$ where $D$ denotes some search direction. The FW algorithm with adaptive step size is given in Algorithm~\ref{FW}. A step of this algorithm can be briefly summarized as follows: at a current iterate $Z^{(i)}$, the algorithm considers the first-order model of the objective function (its linearization), and moves towards a minimizer of this linear function, computed on the same domain $Q$.
\begin{algorithm}[H]
\begin{algorithmic}[1]
\STATE $Z^{(0)} \in Q$, number of iterations \textit{I}.
\FOR {$i=1,2,\dots,$ I}
    \STATE Compute $V^{(i)} := \underset{Z \in Q}{\text{argmin }} \langle \nabla \Phi(Z^{(i-1)}), Z\rangle$ 
    \STATE Choose $0 < \tau^{(i)} \leq 1$. (A standard choice in the literature is $\tau^{(i)}:=\frac{2}{i+1}$.)
    \STATE Update $Z^{(i)}:=(1-\tau^{(i)})Z^{(i-1)}+\tau^{(i)} V^{(i)}$
\ENDFOR
\end{algorithmic}
\caption{Frank-Wolfe algorithm \label{FW}}
\label{alg:seq}
\end{algorithm}

Algorithm~\ref{SCCAE-NMF} is a particular case of Algorithm~\ref{FW} for which $\tau^{(i)}=1$ for all $i$. 
In the last decade, FW-based methods have regained interest in many fields, mainly driven by their good scalability and the crucial property that Algorithm~\ref{FW} maintains its iterates as a convex combination of a few extreme points. 
This results in the generation of sparse and low-rank solutions since at most one additional extreme point of the set $Q$ is added to the convex combination at each iteration. More details and insights about the later observations can be found in \cite{10.1145/1824777.1824783,jaggi2011}. 
 FW algorithms have been recently studied in terms of convergence guarantees for the minimization of various classes of functions over convex sets, such as convex functions with Lipschitz continous 
gradient~\cite{pmlr-v28-jaggi13}, and non-convex differentiable 
functions~\cite{lacoste2016convergence}.
However, we are not aware of any convergence rates proven for Algorithm~\ref{FW} when solving \ngi{\eqref{opt_prob} assuming only concavity of the objective}. To derive  rates in the concave setting, we need to define a measure  of stationarity for our iterates. In this paper, we consider the so-called FW gap of $\Phi$ at $Z^{(i)}$ defined as follows:
\begin{equation}\label{mui_def}
    \mu^{(i)} := \underset{Z \in Q}{\max} \langle \nabla \Phi(Z^{(i)}), Z^{(i)}-Z\rangle . 
\end{equation} 
This quantity is standard in the analysis of FW algorithms, see \cite{pmlr-v28-jaggi13,lacoste2016convergence} and the references therein.
A point $Z^{(i)}$ is a stationary point for the constrained optimization problem \eqref{opt_prob} if and only if $  \mu^{(i)}=0$. Moreover, the FW gap 
\begin{itemize}
    \item \ngi{provides a lower bound on the accuracy:  
    $0 \leq \mu^{(i)} \leq \Phi(Z^{(i)}) - \Phi^*$ for all $i$, where $\Phi^*:= \min_{Z \in Q} \Phi(Z)$}, 
    \item is affine invariant, that is, it is invariant with respect to an affine transformation of the domain $Q$ in problem \eqref{opt_prob} \cite{pmlr-v28-jaggi13},  and 
    \item is not tied to any specific choice of norms, unlike criteria such as $\left\|\nabla \Phi(Z^{(i)}) \right\|$.  
\end{itemize}

Let us provide a convergence rate for the FW algorithm (Algorithm~\ref{FW}).   
\begin{theorem}\label{Theo1}
Consider the problem \eqref{opt_prob} where $\Phi$ is a continuously differentiable concave function over the compact convex domain $Q$. 
Let us denote $Z^{(i)}$ the sequence of iterates generated by the FW algorithm (Algorithm~\ref{FW}) applied on~\eqref{opt_prob}. \revise{Assume there exists a constant $\tilde{\tau} > 0$ such that $\tilde{\tau} \leq \tau^{(i)} \leq 1$ for all $i$.} 
Then the minimal FW gap, 
defined as $\tilde{\mu}^{(i)} := \min_{0 \leq j \leq i} \, \mu^{(j)}$, 
satisfies, for all $i=1,2,\dots$, 
\begin{equation}\label{convRate}
    \tilde{\mu}^{(i)} \; \leq \; \frac{1}{\tilde{\tau}} \frac{\Phi(Z^{(0)})-\Phi^*}{i+1},
\end{equation}
where  
 $\Phi(Z^{(0)})-\Phi^*$ is the (global) accuracy of the initial iterate. 
\end{theorem}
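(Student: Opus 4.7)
The plan is to exploit the concavity of $\Phi$ to translate one FW step into a guaranteed decrease of the objective proportional to the current FW gap, and then to telescope these decreases to bound the minimal gap. Since the subproblem defining $V^{(i)}$ is exactly the LMO, by definition of $\mu^{(i)}$ we have
\begin{equation*}
\mu^{(i-1)} \;=\; \langle \nabla \Phi(Z^{(i-1)}),\, Z^{(i-1)} - V^{(i)}\rangle,
\end{equation*}
so the FW gap is immediately accessible from the quantities computed at each step.

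Next, I would use concavity of $\Phi$: the linearization at $Z^{(i-1)}$ is a global upper bound of $\Phi$ on $Q$. Applied to $Z^{(i)} = Z^{(i-1)} + \tau^{(i)}(V^{(i)} - Z^{(i-1)})$ this gives
\begin{equation*}
\Phi(Z^{(i)}) \;\leq\; \Phi(Z^{(i-1)}) + \tau^{(i)} \langle \nabla\Phi(Z^{(i-1)}),\, V^{(i)} - Z^{(i-1)}\rangle \;=\; \Phi(Z^{(i-1)}) - \tau^{(i)} \mu^{(i-1)}.
\end{equation*}
Rearranging and using $\tau^{(i)} \geq \tilde{\tau}$ yields the per-step descent inequality
\begin{equation*}
\tilde{\tau}\,\mu^{(i-1)} \;\leq\; \tau^{(i)}\mu^{(i-1)} \;\leq\; \Phi(Z^{(i-1)}) - \Phi(Z^{(i)}).
\end{equation*}

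The rest is a straightforward telescoping argument. Summing the last inequality for $j = 1,\dots,i+1$ (equivalently, over the first $i+1$ iterations) produces
\begin{equation*}
\tilde{\tau} \sum_{j=0}^{i} \mu^{(j)} \;\leq\; \Phi(Z^{(0)}) - \Phi(Z^{(i+1)}) \;\leq\; \Phi(Z^{(0)}) - \Phi^*,
\end{equation*}
where the last bound uses the definition of $\Phi^*$, which is finite because $\Phi$ is continuous on the compact set $Q$. Since every term in the sum is at least $\tilde{\mu}^{(i)}$ by the definition of the minimal FW gap, the left-hand side is bounded below by $\tilde{\tau}\,(i+1)\,\tilde{\mu}^{(i)}$. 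Dividing by $\tilde{\tau}(i+1)$ gives the claimed rate
\begin{equation*}
\tilde{\mu}^{(i)} \;\leq\; \frac{1}{\tilde{\tau}}\,\frac{\Phi(Z^{(0)}) - \Phi^*}{i+1}.
\end{equation*}

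The only delicate point is ensuring that $\mu^{(i)} \geq 0$ so that bounding each $\mu^{(j)}$ from below by $\tilde{\mu}^{(i)}$ is well-posed; this is immediate because $Z^{(i)} \in Q$ is itself a feasible candidate in the max defining $\mu^{(i)}$, yielding the value $0$. Note that the argument uses neither Lipschitz continuity of $\nabla \Phi$ nor convexity of $\Phi$; only concavity (to turn the linearization into a global upper bound) and boundedness from below of $\Phi$ on $Q$ (to keep the telescoping sum finite) are needed, which matches the minimal hypotheses of the theorem. For the particular choice $\tau^{(i)} = 1$ used in Algorithm~\ref{SCCAE-NMF}, one takes $\tilde{\tau} = 1$ and recovers the exact $\mathcal{O}(1/i)$ rate announced in the abstract.
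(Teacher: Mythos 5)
Your proposal is correct and follows essentially the same route as the paper's proof: concavity gives the linearization upper bound, the FW step turns the directional term into $-\tau^{(i)}\mu^{(i-1)}$, and a telescoping sum combined with $\Phi(Z^{(i+1)})\geq\Phi^*$ and the definition of $\tilde{\mu}^{(i)}$ yields the rate. The only (immaterial) difference is that you apply the bound $\tau^{(i)}\geq\tilde{\tau}$ before summing rather than after, and your indexing of $V^{(i)}$ against $Z^{(i-1)}$ is in fact the one consistent with Algorithm~\ref{FW} as stated.
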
 
\begin{proof}
\revise{Using~\eqref{opti_concavfun}}, 
 any points $Z^{(i)}$ and $Z^{(i+1)}$ generated by Algorithm~\ref{FW} satisfy    
\begin{equation}\label{conca_Phi}
    \Phi(Z^{(i+1)}) \leq \Phi(Z^{(i)}) + \langle \nabla \Phi(Z^{(i)}),Z^{(i+1)}-Z^{(i)} \rangle . 
\end{equation}
Let \revise{us} substitute  $Z^{(i+1)}$ by its construction in Algorithm~\ref{FW} (line 5) in \eqref{conca_Phi} to obtain 
\begin{equation}\label{demo_eq2}
    \begin{aligned}
       \Phi(Z^{(i+1)}) & \leq  \Phi(Z^{(i)}) + \langle \nabla \Phi(Z^{(i)}),(1-\tau^{(i)})Z^{(i)}+\tau^{(i)} V^{(i)}-Z^{(i)} \rangle \\
         & = \Phi(Z^{(i)}) - \tau^{(i)} \langle \nabla \Phi(Z^{(i)}),Z^{(i)}- V^{(i)} \rangle . 
    \end{aligned}
\end{equation} 
Let us show that $\langle \nabla \Phi(Z^{(i)}),Z^{(i)}- V^{(i)} \rangle$ is equal to $\mu^{(i)}$ defined in Equation~\eqref{mui_def}: 
\begin{equation}
    \begin{aligned}
        \mu^{(i)} & = \underset{Z \in Q}{\max} \langle \nabla \Phi(Z^{(i)}), Z^{(i)}-Z\rangle \\
              & = \langle \nabla \Phi(Z^{(i)}), Z^{(i)}\rangle + \underset{Z \in Q}{\max} \langle \nabla \Phi(Z^{(i)}), -Z\rangle \\
              & = \langle \nabla \Phi(Z^{(i)}), Z^{(i)}\rangle - \underset{Z \in Q}{\min} \langle \nabla \Phi(Z^{(i)}), Z\rangle \\
              & = \langle \nabla \Phi(Z^{(i)}), Z^{(i)}\rangle - \langle \nabla \Phi(Z^{(i)}), V^{(i)}\rangle = \langle \nabla \Phi(Z^{(i)}), Z^{(i)}-V^{(i)}\rangle, 
    \end{aligned}
\end{equation}
where the fourth equality holds be definition of $V^{(i)}$ from Algorithm~\ref{FW} (line 3). Equation~\eqref{demo_eq2} becomes:
\begin{equation}\label{eq3}
    \Phi(Z^{(i+1)}) \leq \Phi(Z^{(i)}) - \tau^{(i)} \mu^{(i)} . 
\end{equation}
By recursively applying \eqref{eq3} for the iterates generated by Algorithm~\ref{FW}, we obtain 
\begin{equation}\label{eq4}
    \Phi(Z^{(i+1)}) \leq \Phi(Z^{(0)}) - \sum_{j=0}^{i} \tau^{(j)} \mu^{(j)} . 
\end{equation}
Let define the quantities $\tilde{\mu}^{(i)}:=\underset{0 \leq j \leq i}{\min}  \mu^{(j)} $, the minimal FW gap encountered along iterates, and $\tilde{\tau}$ such that $\tilde{\tau} \leq \tau^{(i)} \leq 1$ for all $i \geq 0$, so that inequality~\eqref{eq4} implies  
\begin{equation}\label{eq5}
\begin{aligned}
    & \Phi(Z^{(i+1)}) \leq \Phi(Z^{(0)}) - (i+1) \tilde{\tau} \tilde{\mu}^{(i)} 
    \iff \tilde{\mu}^{(i)} \leq \frac{1}{\tilde{\tau}} \frac{\Phi(Z^{(0)})-\Phi(Z^{(i+1)})}{i+1} . 
\end{aligned}
\end{equation}
Finally, using the fact that $\Phi(Z^{(0)})-\Phi(Z^{(i+1)}) \leq \Phi(Z^{(0)}) - \Phi^*$ where $\Phi^*:= \underset{Z \in Q}{\min} \Phi(Z)$, 
inequality~\eqref{eq5} becomes 
\begin{equation} \label{convaceupperbound}
     \tilde{\mu}^{(i)} \leq \frac{1}{\tilde{\tau}} \frac{\Phi(Z^{(0)})-\Phi^*}{i+1}, 
\end{equation}
which concludes the proof. 
\end{proof}

Theorem~\ref{Theo1} shows that it takes at most $\mathcal{O}(\frac{1}{\epsilon})$ iterations to find an approximate stationary point with gap smaller than $\epsilon$. 
\revise{Note that Theorem~\ref{Theo1}  requires $\min_i \tau^{(i)} > 0$ and, for example, the standard choice  $\tau^{(i)} = \frac{2}{i+1}$ does not satisfy this assumption.} 
}

\revise{
\subsection{Compactness assumption and convergence of Algorithm~\ref{SCCAE-NMF}}  \label{rem:compact} 

By looking at the convergence rate given by~\eqref{convRate}, it is tempting to take $\tilde{\tau}$ as large as possible. However, since the set $Q$ is convex, the maximum allowed value for $\tilde{\tau}$ is 1 to ensure that the iterates $Z^{(i)}$ remain feasible. \revise{This setting leads to a convergence rate of $\mathcal{O}(1/i)$, given the assumptions of Theorem~\ref{Theo1} are satisfied,} and corresponds to Algorithm~\ref{SCCAE-NMF}. 
In Theorem~\ref{Theo1}, we need the set $Q$ to be compact. 
Let us discuss this assumption in the context of Algorithm~\ref{SCCAE-NMF} that relies on our two formulations: 
\begin{enumerate}
    \item For the formulation using exponential cones~\eqref{optiprob2}, a variable $W_{fk}$ (resp.\ $H_{kn}$) equal to zero will correspond to $U_{fk}$ (resp.\ $T_{kn}$) going to $-\infty$, which is not bounded. 
    As recommended by Mosek, we use additional artificial component-wise lower and upper bounds for $U$ and $T$, namely, -35 and 10. Therefore, our implementation actually solves a component-wise bounded version of~\eqref{optiprob2}. However, 
    since $e^{-35} \approx 6 \cdot 10^{-16}$ and 
    $e^{10} \approx 2 \cdot 10^{4}$, numerically, 
    these constraints do not exclude good approximations of $V$ in practice,  as long as the entries in $V$ belong to a reasonable range which can be assumed w.l.o.g.\ by a proper preprocessing of $V$, e.g., $V \leftarrow \frac{V}{\max_{f,n} V_{fn}}$ so that $V_{fn} \in [0,1]$ for all $f,n$; see, e.g., the discussion in~\cite[page 66]{gillis2011nonnegative}.

    \item For the formulation using second-order  cones~\eqref{optiprob4}, we can assume compactness w.l.o.g.  
    In fact, for simplicity, let us consider the formulation~\eqref{basicNMFoptiprob2} in variables $(W,H)$, since the change of variables is the component-wise square root, 
    and keeps the feasible set compact. 
    We can w.l.o.g.\ add a set of normalization constraints on the rows of $H$, such as  $\sum_n  H_{kn} = \| H_{k:} \|_1 = 1$ for all $k$ which leads to a compact set for $H$, since we can use the degree of freedom in scaling between the columns of $W$ and rows of $H$. It remains to show that $W$ can be assumed to be in a compact set. 
    Let us show that the level sets of the objective function are compact, which will give the result. 
    The objective function of~\eqref{basicNMFoptiprob2} is the component-wise $\ell_1$ norm, $\|WH\|_1$. 
    Then, let 
    $(W',H')$ be an arbitrary feasible solution of~\eqref{basicNMFoptiprob2} (such a solution can be easily constructed), and add the constraint $\|WH\|_1 \leq \|W'H'\|_1 = f'$ to formulation~\eqref{basicNMFoptiprob2}, which does not modify its optimal solution set.     
    We have for all $k$
    \[
     \| W_{:k} \|_1  
     = 
     \| W_{:k} \|_1 \| H_{k:} \|_1 
    =  
      \| W_{:k} H_{k:} \|_1 
       \leq 
    \| W H \|_1 \leq f', 
    \] 
    which shows that $W$ in that modified formulation also belongs to a compact set.     The second equality and first inequality follow from nonnegativity of $W$ and $H$. 
\end{enumerate}

under these modifications that make the feasible set $Q$ compact, 
we have the following corollary. 
\begin{corollary} 
Both variants~\eqref{optiprob2} and~\eqref{optiprob4} of Algorithm~\ref{SCCAE-NMF} generate a sequence of iterates $Z^{(i)}$  whose FW gap converges according to $\tilde{\mu}^{(i)} \leq \frac{\Phi(Z^{(0)})-\Phi^*}{i+1}$. 
\end{corollary}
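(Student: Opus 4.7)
The plan is to observe that this corollary is an immediate specialization of Theorem~\ref{Theo1}. The key identification, which was already remarked in the discussion preceding the theorem, is that Algorithm~\ref{SCCAE-NMF} corresponds exactly to Algorithm~\ref{FW} with the constant step size $\tau^{(i)} = 1$ for every iteration $i$. Indeed, the update rule in Algorithm~\ref{SCCAE-NMF} sets $Z^{(i)}$ to the minimizer of the linearized objective over $Q$, which is precisely $V^{(i)}$ in Algorithm~\ref{FW}; taking $\tau^{(i)} = 1$ in the convex combination $Z^{(i)} := (1-\tau^{(i)})Z^{(i-1)} + \tau^{(i)} V^{(i)}$ yields $Z^{(i)} = V^{(i)}$, matching the update in Algorithm~\ref{SCCAE-NMF}.

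Next, I would verify that the hypotheses of Theorem~\ref{Theo1} hold for the two problem instances of interest, namely \eqref{optiprob2} and \eqref{optiprob4}. In both cases the objective $\Phi$ is continuously differentiable and concave (maximizing a convex log-sum-exp is equivalent to minimizing its concave negative; the sum of $\sqrt{U_{fk} T_{kn}}$ is concave and differentiable on the positive orthant after the change of variables). The feasible sets are intersections of exponential or rotated second-order cones with linear constraints, and are convex; compactness can be enforced by adding mild bounds consistent with the upper/lower approximation constraints, so that the theorem applies.

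With these observations, applying Theorem~\ref{Theo1} with $\tilde{\tau} = \tau^{(i)} = 1$ immediately yields
\begin{equation*}
\tilde{\mu}^{(i)} \; \leq \; \frac{1}{1} \cdot \frac{\Phi(Z^{(0)}) - \Phi^*}{i+1} \; = \; \frac{\Phi(Z^{(0)}) - \Phi^*}{i+1},
\end{equation*}
which is precisely the claim of the corollary.

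There is essentially no obstacle here: the whole content of the corollary is the observation that the largest admissible constant step size in Frank-Wolfe (which keeps iterates in $Q$ by convexity) is $1$, and that this yields the successive-linearization scheme used in Algorithm~\ref{SCCAE-NMF}. The only point requiring a brief sanity check, rather than a real argument, is confirming that the compactness assumption of Theorem~\ref{Theo1} is satisfied (or can be assumed without loss of generality) for the NMF formulations at hand, so that $\Phi^*$ is finite and attained.
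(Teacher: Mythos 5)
Your proposal is correct and matches the paper's (implicit) argument exactly: the paper states the corollary as an immediate consequence of Theorem~\ref{Theo1}, having already noted that Algorithm~\ref{SCCAE-NMF} is the special case of Algorithm~\ref{FW} with $\tau^{(i)}=1$ for all $i$, so setting $\tilde{\tau}=1$ in \eqref{convRate} gives the bound. Your added sanity check that $Q$ must be (or can be made) compact so that $\Phi^*$ is finite is a reasonable extra remark but does not change the route.
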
  
}

\begin{remark}[Difference-of-convex-functions optimization] 
Algorithm~\ref{SCCAE-NMF} can also be interpreted as a special case of a difference-of-convex algorithm that minimizes the difference between two convex functions, namely $f_1(Z)-f_2(Z)$~\cite{le2018dc}. In our case, 
we would have $f_1(Z) = 0$. For such problems, a convergence rate to stationary points of order $O(1/i)$ has also been derived when optimizing over a convex compact feasible set, but using a different measure of stationary~\cite{abbaszadehpeivasti2021rate}.
\end{remark}

\revise{
\begin{remark}
Using a proper normalization (see Section~\ref{rem:compact}), the original NMF problem can be tackled directly by the FW algorithm (Algorithm~\ref{FW}), as the linear minimization oracle can be computed in closed form. 
Hence one could use existing results on FW to derive a convergence 
rate. However, the FW algorithm applied to smooth nonconvex problems leads to a worse rate of $O(1/\sqrt{i})$~\cite{lacoste2016convergence}. 
\end{remark}
}

\section{Numerical experiments}\label{num_exp}


In this section, Algorithm~\ref{SCCAE-NMF}, using both formulations \eqref{optiprob2} and \eqref{optiprob4}, is tested for the computation of exact NMF for particular classes of matrices usually considered in the  literature: 
(1)  $10$-by-$10$ matrices randomly generated with nonnegative rank $5$ (each matrix is generated by multiplying two random rank-5 nonnegative matrices), 
(2) four $6$-by-$6$ infinitesimally rigid factorizations with nonnegative rank $5$~\cite{krone2020uniqueness}, denoted $V_{\text{inf}i}$ for $i=1,2,3,4$, 
and 
(3) four $5$-by-$5$ slack matrices corresponding to nested hexagons, denoted $V_{a=x}$,  with nonnegative ranks $3,4,5,5$ depending on a parameter $x = 2,3,4,+\infty$, respectively. 
These matrices are described in more details in the Appendix~\ref{appA}. 

\revise{
In order to make Algorithm~\ref{SCCAE-NMF} practically more effective, we incorporate two improvements in the context of Exact NMF: 
\begin{enumerate}
    \item Sparsity patterns integration: in NMF, entries of $W$ and $H$ are expected to be equal to zero at optimality. Hence, 
    when some entries of $W$ or $H$ are sufficiently close to zero, fixing them to zero for all remaining iterations reduces the number of variables and hence accelerates the subsequent iterations of Algorithm~\ref{SCCAE-NMF}; see Appendix~\ref{appB} for the details. 
    
    \item Final refinement:  
    once a solution is generated by Algorithm~\ref{SCCAE-NMF} for the formulation based on exponential cones, it  can  typically be slightly improved by applying a standard NMF algorithm (we use a few iterations of A-HALS~\cite{6797154}). 
    This is due to the bound constraints on $W$ and $H$; see Section~\ref{rem:compact}. 
\end{enumerate}
}



For each of the matrices, we run our Algorithm~\ref{SCCAE-NMF} for 750 iterations for nested hexagons and random matrices and 3000 iterations for infinitesimally rigid matrices,  
and compare with the state-of-the-art algorithms from~\cite{vandaele2016} with Multi-Start 1 heuristic "ms1" and the Rank-by-rank heuristic "rbr". For each method, we run 100 initializations with SPARSE10, as recommended in~\cite{vandaele2016}, with target precision $10^{-6}$. Note that a different random matrix is generated each time for the experiments on random matrices, 
following the procedure described in the Appendix~\ref{appA}.  \vlepi{All tests are preformed using Matlab R2021b on a laptop
Intel CORE i7-11800H CPU @2.3GHz 16GB RAM. The code is available from \url{https://bit.ly/3FqMqhD}.}

Table~\ref{tab:quali_measu_SCCAENMF} reports the number of successes over 100 attempts to compute the exact NMF of the input matrices, where the success is 
defined as obtaining a solution where $\frac{\left\|V-WH \right\|_F}{\left\| V \right\|_F}$ is below the target precision, namely $10^{-6}$.

\begin{table}[ht!]
\begin{center}
\caption{Comparison of the two variants of Algorithm~\ref{SCCAE-NMF} for \eqref{optiprob2} and ~\eqref{optiprob4} with the algorithm from~\cite{vandaele2016} with the "ms1" and "rbr" heuristics. Each run is performed with 100 initializations to compute the factorizations of matrices described in Appendix~\ref{appA}. In bold, we indicate the algorithm that found the most exact NMFs.}   
\label{tab:quali_measu_SCCAENMF}
\resizebox{\columnwidth}{!}{%
\begin{tabular}{|c|c|c|c|c|c|} 
\hline
\multicolumn{2}{|c|}{ }      & Algorithm~\ref{SCCAE-NMF}   & Algorithm~\ref{SCCAE-NMF}   & Algorithm from~\cite{vandaele2016} & Algorithm from~\cite{vandaele2016}\\
\multicolumn{2}{|c|}{ }      &  for \eqref{optiprob2} &  for \eqref{optiprob4}  & with ``ms1'' & with ``rbr''\\
\hline
\multicolumn{2}{|c|}{\textbf{Matrices}}& $\textbf{/100}$ & $\textbf{/100}$ & $\textbf{/100}$ & $\textbf{/100}$\\
\hline
\multicolumn{2}{|c|}{Random matrices} & 100 & 100 & 100  & 100\\
\hline  
Inf. Rig. Fac. & ${V_{\text{inf}1}}$ &5 & \textbf{7} & \vlepi{6} & 0\\
 &  $V_{\text{inf}2}$ &\vlepi{16} & \vlepi{38}& 34&\textbf{97}\\
 & $V_{\text{inf}3}$ &14 & \vlepi{33} &14& \textbf{90}\\
 &  ${V_{\text{inf}4}}$ &\vlepi{16} & \textbf{\vlepi{20}}& 15& 0\\ \hline  
 
Nested hexagons & $V_{a=2}$ &100 &100 &100 &100 \\ 
    & $V_{a=3}$ &100 &100 & 100 &100 \\ 
    & $V_{a=4}$ &\vlepi{35}&\vlepi{69} & 36 & \textbf{100}\\ 
    & $V_{a \rightarrow +\infty}$ &17 &\vlepi{42} & 20 & \textbf{100} \\ \hline
\end{tabular}
}
\end{center}
\end{table}

We observe the following: 
\begin{itemize}

\item All algorithms find exact NMFs in all runs for random matrices. It is well-known that factorizing randomly generated matrices is typically easier~\cite{vandaele2016}. 
This shows that Algorithm~\ref{SCCAE-NMF} with both formulations \eqref{optiprob2} and \eqref{optiprob4} is also able to compute exact NMFs in this scenario, which is reassuring. 

    \item Looking at the nonrandom matrices, Algorithm~\ref{SCCAE-NMF} with both formulations \eqref{optiprob2} and \eqref{optiprob4}, and ``ms1'' from~\cite{vandaele2016} are the only algorithms able to compute an exact NMF for at least some of the 100 initializations. Moreover, among these three algorithms,  
    Algorithm~\ref{SCCAE-NMF} with \eqref{optiprob4} found most frequently an exact NMF.
    
    \item For some matrices (namely, $V_{a=4}$ and $V_{a \rightarrow +\infty}$), ``rbr'' from~\cite{vandaele2016} is able to compute exact NMF for all initializations, which is not the case of the other algorithms. 
    However, ``rbr'' is not able to compute exact NMFs for $V_{\text{inf}1}$ and  $V_{\text{inf}2}$.


\end{itemize}

In summary, Algorithm~\ref{SCCAE-NMF} competes favorably with the algorithms proposed in~\cite{vandaele2016}, and appears to be more robust in the sense that it computes exact NMFs in all the tested cases. 
In addition, the second-order cone formulation, 
Algorithm~\ref{SCCAE-NMF} with \eqref{optiprob4}, 
performs slightly better than with the exponential cone formulation, 
Algorithm~\ref{SCCAE-NMF} with \eqref{optiprob2}. 


\paragraph*{Computational time}
In terms of computational time, Algorithm~\ref{SCCAE-NMF} \vlepi{performs similarly to} algorithm from~\cite{vandaele2016} \vlepi{for the considered matrices, but it does not scale as well}. The main reason is that it relies on interior-point methods, while \cite{vandaele2016} relies on first-order methods (more precisely, exact BCD). For example, for the infinitesimally rigid matrices,  
\vlepi{Algorithm~\ref{SCCAE-NMF} and \cite{vandaele2016} take between 2 and 16 seconds.
We would report slower performance for Algorithm~\ref{SCCAE-NMF} compared to \cite{vandaele2016} for larger matrices.} Hence a possible direction of research would be to use faster methods to tackle the  
conic optimization problems.  

\vlepi{
\paragraph*{Numerical validation of the rate of convergence}
As a simple empirical validation of the rate of convergence proposed in Section~\ref{sec_convRes}, we report in Figure~\ref{fig:emprival} the evolution of the minimum FW gap computed along iterations by Algorithm~\ref{SCCAE-NMF} (for \eqref{optiprob2} and \eqref{optiprob4}) for one tested matrix, namely ${V_{\text{inf}2}}$. Similar behaviours were observed for all the tested input matrices: 
additional figures are given in Appendix~\ref{appC}.  
Note that we also integrated a variant of Algorithm~\ref{SCCAE-NMF} for which  $\tau^{(i)}:=\frac{2}{i+1}$ (a standard choice in the literature for FW algorithms). 
In Figure~\ref{fig:emprival}, we observe that the behaviour of the minimal FW gap is in line with the theoretical results from Section~\ref{sec_convRes}.
Furthermore, the choice $\tilde{\tau}=\tau^{(i)}:=1$ leads to faster decrease of the minimal FW gap encountered along iterations, as expected. 
}


\begin{figure}[!tbp]
  \centering
  \subfloat[Algorithm~\ref{SCCAE-NMF} for \eqref{optiprob2}]{\includegraphics[width=0.49\textwidth]{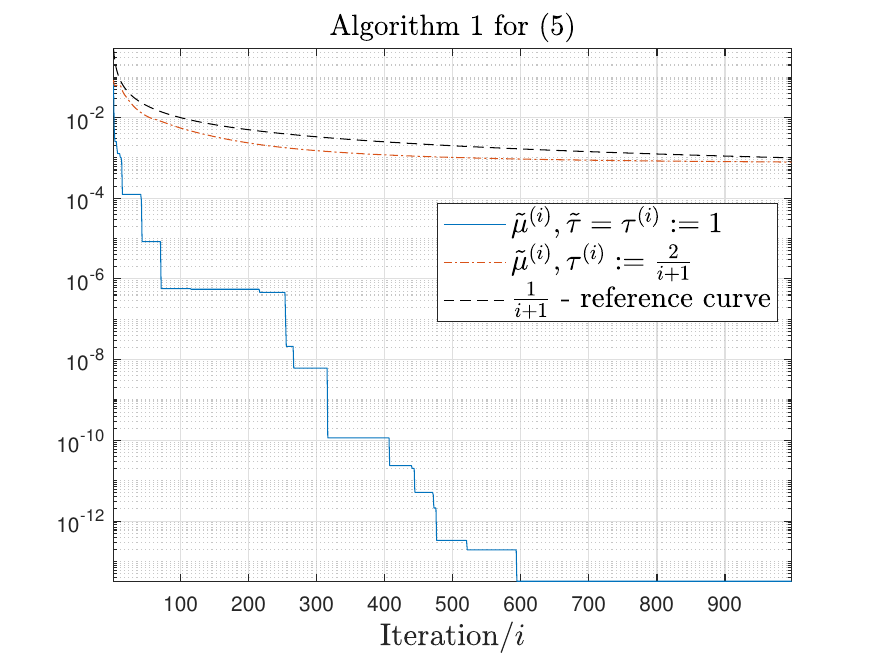}\label{fig:f1}}
  \hfill
  \subfloat[Algorithm~\ref{SCCAE-NMF} for \eqref{optiprob4}]{\includegraphics[width=0.49\textwidth]{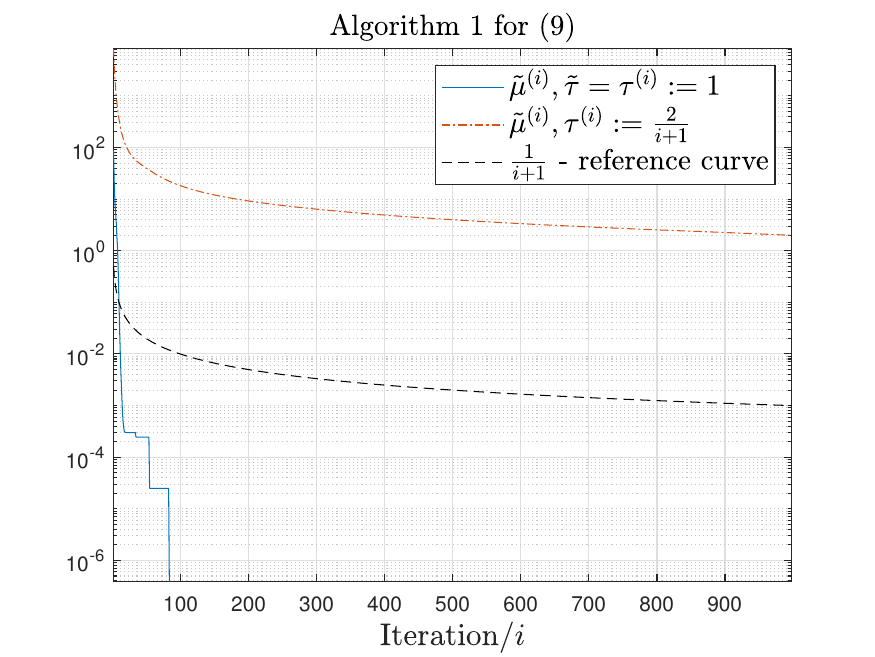}\label{fig:f2}}
  \caption{Evolution of the minimum FW gap computed along iterations by Algorithm~\ref{SCCAE-NMF} for \eqref{optiprob2} (left) and \eqref{optiprob4} (right) for the matrix ${V_{\text{inf}2}}$.}\label{fig:emprival}
\end{figure}


\paragraph*{Impact of the initialization} 

We also investigated the impact of using an improved initialization for our Algorithm~\ref{SCCAE-NMF} with \eqref{optiprob4}, based on a rank-one NMO slightly perturbed with nonnegative random uniform values. More precisely, we set $Z^{(0)} = Z_{K=1} + d R \frac{\|Z_{K=1}\|_F}{\|R\|_F}$ where $Z_{K=1}$ is the rank-one NMO computed with the methodology detailed in 
Section~\ref{subsec_rank1NMO}, $R$ is a matrix of appropriate size of uniformly distributed random numbers in the interval $(0,1)$, and $d$ is a parameter to be defined by the user. 
In our numerical experiments, we chose values for $d$ within the interval $[0.01, 0.05]$. We find that Algorithm~\ref{SCCAE-NMF} with \eqref{optiprob4} using that dedicated initialization found respectively 74, 65 and 52 successes for matrices $V_{a \rightarrow +\infty}$, $V_{\text{inf}2}$ and $V_{\text{inf}4}$, respectively, a marked improvement over the default random initialization (where it had 42, 38, 20 successes, respectively). For the other tested matrices, it did not change the result significantly (only a few additional successes). Therefore a possible direction of research would be the design of more advanced strategies for the initialization of $(U, T)$. 


\section{Conclusion} 

In this paper, we introduced two formulations for computing exact NMFs,  namely \eqref{basicNMFoptiprob}  and \eqref{basicNMFoptiprob2} that are  under- and upper-approximation formulations for NMF, respectively. 
For each formulation, we used a particular change of variables that enabled the use of two convex cones, namely the exponential and second-order cones, to reformulate these problems as the minimization of a concave function over a convex feasible set.   
In order to solve the two optimization problems, we proposed   Algorithm~\ref{SCCAE-NMF} that relies on the resolution of successive linear  approximations of the objective functions and the use of interior-point methods.  \vlepi{We also showed that our optimization scheme relying on successive linearizations is a special case of the Frank-Wolfe (FW) algorithm. 
Using an appropriate measure of stationarity, namely the FW gap, 
we showed in Theorem~\ref{Theo1} that the minimal FW gap generated by our algorithm  converges as $\mathcal{O}(\frac{1}{i})$, where $i$ is the iteration index.} 
\ngi{We believe this type of global convergence rate to a stationary point is new for NMF.} 
We showed that Algorithm~\ref{SCCAE-NMF} is able to compute exact NMFs for several classes of nonnegative matrices (namely, randomly generated matrices, infinitesimally  rigid matrices, and slack matrices of nested hexagons) and as such demonstrate its competitiveness compared to recent methods from the literature. However, we have only tested  Algorithm~\ref{SCCAE-NMF} on a limited number of nonnegative matrices \revise{for exact NMF}. 
In the future we plan to test it on a larger library of nonnegative matrices \revise{and also to compute approximate NMFs in data analysis applications}, in order to better understand the behavior of Algorithm~\ref{SCCAE-NMF} along with the two formulations, \eqref{optiprob2} and \eqref{optiprob4}. 

Further works also include:
\begin{itemize}
    
    \item The design of more advanced strategies for the initialization of $(U,T)$.
    
    \item The elaboration of alternative formulations for \eqref{optiprob2} and \eqref{optiprob4} to deal with the non-uniqueness of the NMF models. In particular, we plan to develop new formulations that discard solutions of the form  $V=\tilde{W}\tilde{H}=\left(WE\right)\left(E^{-1}H\right)$ for a given solution $(W,H)$ and for invertible matrices $E$ such that $WE\geq 0$ and $E^{-1}H\geq 0$. 
    For example, one could remove the permutation and scaling ambiguity for the columns of $W$ and rows of $H$, to remove some degrees of freedom in the formulations. 
    We refer the interested reader to~\cite{fu2019nonnegative} and \cite[Chapter 4]{gillis2020bk}, and the references therein, for more information on the non-uniqueness of NMF. 
    
    \item The use of our framework for other closely related problems; in particular the computation of symmetric NMFs which requires $H = W^\top$; this problem is closely related to completely positive matrices~\cite{berman2003completely}. Symmetric NMF can be used for data analysis and in particular for various clustering tasks~\cite{doi:10.1137/1.9781611972825.10}.
    
    \item \vlepi{The use of upper-approximations that are more accurate     
    than linearizations for concave functions such as second-order models, and  study the convergence for such models.}
    
\end{itemize}

\revise{
 \section*{Acknowledgements}

We thank the two reviewers for their careful reading and insightful feedback that helped us improve the paper significantly. 
} 












\bibliographystyle{tfs}
\bibliography{interacttfssample}

\newpage 

\appendix 

\section{Factorized matrices} \label{appA}

In this appendix, we describe the matrices considered for the numerical experiments  in Section~\ref{num_exp}:

\begin{itemize}
    \item Randomly generated matrices: It is standard in the NMF literature to use randomly generated matrices to compare algorithms (see, e.g.,~\cite{doi:10.1137/110821172}). 
    In this paper, we used the standard approach: $V = WH \in \mathbb{R}_+^{F \times N}$ where each entry of $W \in \mathbb{R}_+^{F \times K}$ and $H \in \mathbb{R}_+^{K \times N}$ is generated using the uniform distribution in the interval $[0,1]$, and $K \leq \min(F,N)$.  
    With this approach, $\rank(V) = \rank_+(V) = K$ with probability one. 
    In the numerical experiments reported in Section~\ref{num_exp}, we used $F=N=10$ and $K=5$.
    
    \item Infinitesimally rigid factorizations: in this paper, we consider four infinitesimally rigid factorizations for $5 \times 5$ matrices with positive entries and with nonnegative rank equal to four from~\cite{krone2020uniqueness}:
    \begin{equation*} 
     \begin{aligned}
        & V_{\text{inf}1} = \left( 
        \begin{array}{ccccc}
            573705 &    806520 &    167622 &    246500 & 531659 \\
            397096 &    39600 &    299176 &    63720 & 274120  \\
            131646 &    403260 &    30269 &    226915 & 264510  \\
            9114 &    85160 &    311182 &    827468 & 851798  \\
            147857 &    3200 &    351037 &    599025  & 697755     
        \end{array} 
        \right), \\
        & V_{\text{inf}2} = \left( 
        \begin{array}{ccccc}
            30893 &    319912 &    149770 &    873& 111428 \\
            383490 &    87990 &    5580 &    628440 & 587250  \\
            560076 &    1030324 &    331070 &    288045 & 350647  \\
            203830 &    305184 &    277512 &    264376 & 205933  \\
            90911 &    142936 &    500784 &    618842  & 609633
             
        \end{array} 
        \right), \\
        & V_{\text{inf}3} = \left( 
        \begin{array}{ccccc}
            948201 &    723609 &    958755 &    591858& 397953 \\
            222448 &    218040 &    30429 &    348793 & 15825  \\
            329588 &    7189 &    623001 &    12012 & 469185  \\
            467424 &    160704 &    115092 &    835504 & 343912  \\
            1114797 &    932972 &    975775 &    997164  & 636096
             
        \end{array} 
        \right),\\
        & V_{\text{inf}4} = \left( 
        \begin{array}{ccccc}
            88076 &    294646 &    658787 &    902872& 244559 \\
            2216 &    4216 &    596705 &    652698 & 250465  \\
            279360 &    180864 &    769506 &    1051380 & 391634  \\
            553284 &    826606 &    765406 &    293965 & 883775  \\
            696039 &    897917 &    148301 &    832169  & 169525
             
        \end{array} 
        \right).
        \end{aligned}
    \end{equation*}
    These matrices have been shown to be challenging to factorize. We refer the reader to~\cite{krone2020uniqueness} for more details. 
    \item Nested hexagons problem: 
     computing an exact NMF is equivalent to tackle a 
    well-known problem in computational geometry which is referred to as nested polytope problem. Here we consider a family of input matrices whose exact NMF  corresponds to finding a polytope nested between two hexagons; see \cite[Chapter 2]{gillis2020bk} and the references therein.  
    Given $x>1$, $V_{a=x}$ is defined as 
    \begin{equation*} 
        \frac{1}{x} \left( 
        \begin{array}{cccccc}
            1 &    x &    2x-1 &    2x-1 & x &  1 \\
            1 &    1 &    x &    2x-1 & 2x-1 &  x \\
            x &    1 &    1 &    x & 2x-1 &  2x-1 \\
            2x-1 &    x &    1 &    1 & x &  2x-1 \\
            2x-1 &    2x-1 &    x &    1 & 1 &  x \\
            x &    2x-1 &    2x-1 &    x & 1 &  1 
        \end{array} 
        \right) 
    \end{equation*}
    which satisfies $\rank(V_{a=x})=3$ for any $x > 1$.  
    The inner hexagon is smaller than the outer one with a ratio of $\frac{a-1}{a}$. We consider three values for $a$:
    \begin{itemize}
        \item $a=2$: the inner hexagon is twiced smaller than the outer one and we can fit a triangle between the two, thus $\rank_+(V_a)=3$.
        \item $a=3$: the inner hexagon is $2/3$ smaller than the outer one and we can fit a rectangle between the two, thus $\rank_+(V_a)=4$.
        \item $a=4$: $\rank_+(V_a)=5$.
        \item $a \rightarrow +\infty$, which gives:
        \begin{equation*} 
        V = \left( 
        \begin{array}{cccccc}
            0 &    1 &    2&    2 &  1 & 0\\
            0 &    0 &    1 &    2& 2& 1  \\
            1 &    0 &    0 &    1 & 2 &  2 \\
            2 &    1 &    0 &    0& 1 &  2\\
            2 &    2 &    1&    0& 0 &  1 \\
            1 &    2 &    2 &    1 & 0 &  0 
             
        \end{array} 
        \right) 
    \end{equation*}
        with $\rank_+(V)=5$.
    \end{itemize}
\end{itemize}





\section{Sparsity Patterns Integration} \label{appB}

This appendix details the SPI procedure for quadratic cones, similar rationale has been followed for exponential cones.
Due to nonnegative constraints on the entries of $W$ and $H$, one can expect sparsity patterns for the solutions $(W,H)$, as for the solution $(U,T)$ of \eqref{optiprob4} since $W_{fk}=G(U_{fk})=\sqrt{U_{fk}}$ and $H_{kn}=G(T_{kn})=\sqrt{T_{kn}}$. Obviously, the sparsity for the solutions is reinforced by the sparsity of the input matrix $V$.
One can observe that the objective function $\Phi$ from \eqref{optiprob4} is not L-smooth on the interior of the domain, that is the non-negative orthant. In the case the $(f,k)$-entry of the current iterate $U^{i-1}$ tends to zero, the corresponding entry of the gradient of $\Phi$ w.r.t. $U$ tends to $\infty$ which therefore ends the optimization process. 
In order to tackle this issue and enables the solution to reach the desired tolerance of $10^{-6}$, we integrated an additional stage within the second building block of Algorithm~\ref{SCCAE-NMF}. This additional stage is referred to as "Sparsity Pattern Integration".
 Let us illustrate its principle on the following case: the entry $U_{\bar{f},\bar{k}}^{i-1}$ tends to zero. Let us now fix $U_{\bar{f},\bar{k}}$ to zero, drop this variable from the optimization process and observe the impact on the constraints of \eqref{optiprob3} in which variable $U_{\bar{f},\bar{k}}$ is involved;
 the inequality constraints identified by index $f=\bar{f}$ are:
     \begin{equation*}
         \sqrt{U_{\bar{f},1}} \sqrt{T_{1,n}} +...+ \sqrt{U_{\bar{f},\bar{k}}} \sqrt{T_{\bar{k},n}}+...+\sqrt{U_{\bar{f},K}} \sqrt{T_{K,n}} \geq  V_{\bar{f},n} \text{ for } n \in \mathcal{N}.
     \end{equation*}
First, since $\sqrt{U_{\bar{f},\bar{k}}}=0$, there is no more constraints on $\sqrt{T_{\bar{k},n}}$ for $N$ inequalities identified by index $f=\bar{f}$. Second, for the problem \eqref{optiprob4} and its successive linearizations, it is then clear that $N$ conic variables $t_{\bar{f},\bar{k},n}$ (and hence the $N$ associated conic constraints) can be dropped from the optimization process. 
Finally, 
the linear term $\left[ \nabla_{U} \Phi \left(U^{i-1},T^{i-1} \right) \right]_{\bar{f},\bar{k}} U_{\bar{f},\bar{k}}$ is removed from the current linearizations of $\Phi$. The same rationale is followed for the case entries of the current iterate for $T$ tend to zero. \\

On a practical point of view, at each activation of SPI, Algorithm~\ref{SCCAE-NMF} checks if entries of the current iterates $(U^{i-1},T^{i-1})$ are below a threshold $th$ defined by the user. Hence the corresponding entries of $U$ and $T$ are set to zero so that a sparsity pattern is determined, that are the indices of these entries. The successive linearizations of  \eqref{optiprob4} are automatically updated based on the current sparsity pattern with the approach explained above. \\

Let us illustrate the impact of triggering the SPI procedure on the solutions obtained for the factorization of the following matrix $V$:
\begin{equation} \label{example}
V = \left( 
\begin{array}{cccccc}
    0 &    1 &    2 &    2 & 1 & 0 \\
    0 &    0 &    1 &    2 & 2 & 1 \\
     1 &    0 &    0 &   1 & 2 & 2 \\
    2 &    1 &    0 &    0 &  1 & 2 \\
    2 &    2 &    1 &    0 & 0 &  1\\
    1 &    2 &    2 &    1 & 0 &  0
\end{array} 
\right).
\end{equation}
The nonnegative rank of \eqref{example} is known and is equal to $5$. Algorithm~\ref{SCCAE-NMF} is used to compute an exact NMF of $V$ with the following input parameters:
\begin{itemize}
    \item $K=5=\rank_+(V)$,
    \item $th=10^{-3}$,
    \item the maximum number of iterations defined by parameter \textit{maxiter} is set to 500 and the SPI procedure is triggered at iteration $400$.
\end{itemize}
Figure~\ref{fig:SPIdemonstration} displays the evolution of $\frac{\left\|V-WH \right\|_F}{\left\| V \right\|_F}$ along iterations for $V$ \eqref{example} with a factorization rank $K=5$. One can observe that, once the SPI is activated, the relative Frobenius error drops from 5 $10^{-4}$ to 8 $10^{-9}$, hence below the tolerance of $10^{-6}$ such that we assume an exact NMF $(W,H)$ is found. For this experiment, we obtain:

\begin{equation*} 
\begin{aligned}
& W = \left( 
\begin{array}{ccccc}
    0 &    1.4748 &    0.9259 &    0 & 0  \\
    0.7824 &    0 &    1.8517 &    0 & 0  \\
     0 &    0 &    0.9259 &   0 & 1.4716  \\
    0 &    0 &    0 &    0.6024 &  1.4716  \\
    0.7824 &    0 &    0 &    1.2049 & 0 \\
    0 &    1.4748 &    0 &    0.6024 & 0 
\end{array} 
\right), \\
& H = \left( 
\begin{array}{cccccc}
    0 &    0 &    1.2781 &    0 & 0 & 1.2781 \\
    0 &    0.6780 &    1.3561 &    0.6780 & 0  & 0\\
     0 &    0 &    0 &  1.0801 & 1.0801  & 0\\
    1.6599 &    1.6599 &    0 &    0 &  0 & 0  \\
    0.6796 &    0 &    0 &    0 & 0.6796  & 1.3591
\end{array} 
\right).
\end{aligned}
\end{equation*}

\begin{figure}[h!]
      \centering
	  \includegraphics[width=0.7\linewidth]{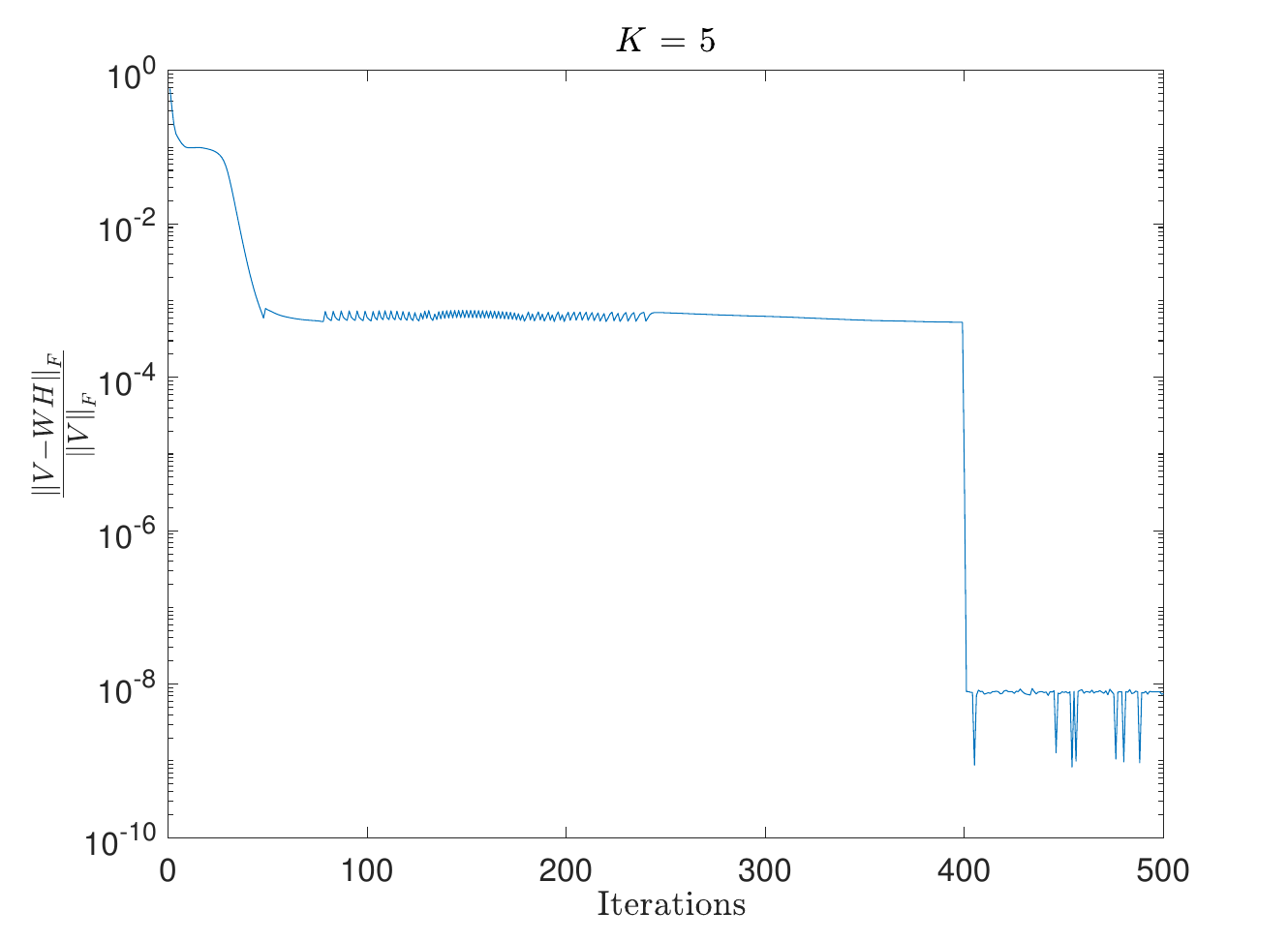}
	  \caption{Evolution of $\frac{\left\|V-WH \right\|_F}{\left\| V \right\|_F}$ along iterations; SPI is triggered at iteration $400$.}\label{fig:SPIdemonstration}
\end{figure}

\section{Additional empirical validations of the convergence rates} \label{appC} 

In this appendix, we report in Figures~\ref{fig:emprivalappendix1} and~\ref{fig:emprivalappendix2} additional empirical validations of the convergence rate introduced in~\ref{sec_convRes} for the two others classes on tested matrices, namely the random matrices and the matrices related to nested hexagons problem. For the later, we considered the particular case $a=4$.

\begin{figure}[!tbp]
  \centering
  \subfloat[Algorithm~\ref{SCCAE-NMF} for \eqref{optiprob2}]{\includegraphics[width=0.49\textwidth]{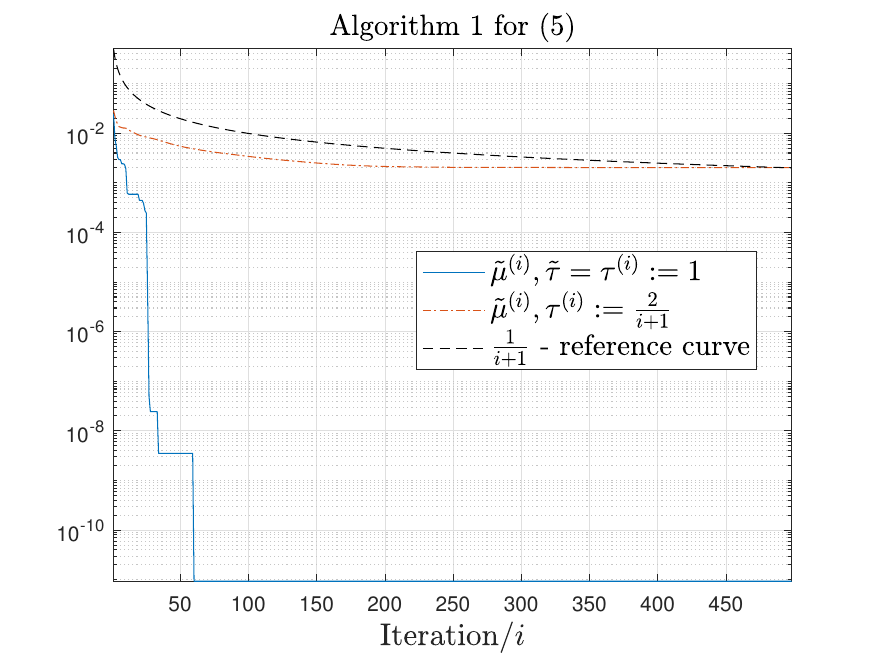}\label{fig:f1append1rand}}
  \hfill
  \subfloat[Algorithm~\ref{SCCAE-NMF} for \eqref{optiprob4}]{\includegraphics[width=0.49\textwidth]{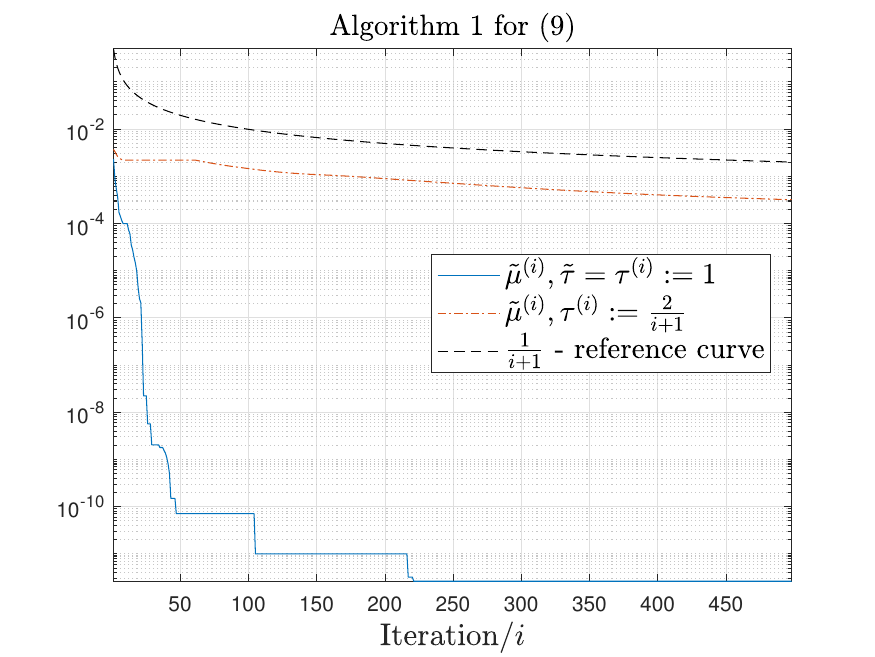}\label{fig:fappend1rand}}
  \caption{Evolution of the minimum FW gap computed along iterations by Algorithm~\ref{SCCAE-NMF} for \eqref{optiprob2} (left) and \eqref{optiprob4} (right) for a randomly geenrated matrix.}\label{fig:emprivalappendix1}
\end{figure}

\begin{figure}[!tbp]
  \centering
  \subfloat[Algorithm~\ref{SCCAE-NMF} for \eqref{optiprob2}]{\includegraphics[width=0.49\textwidth]{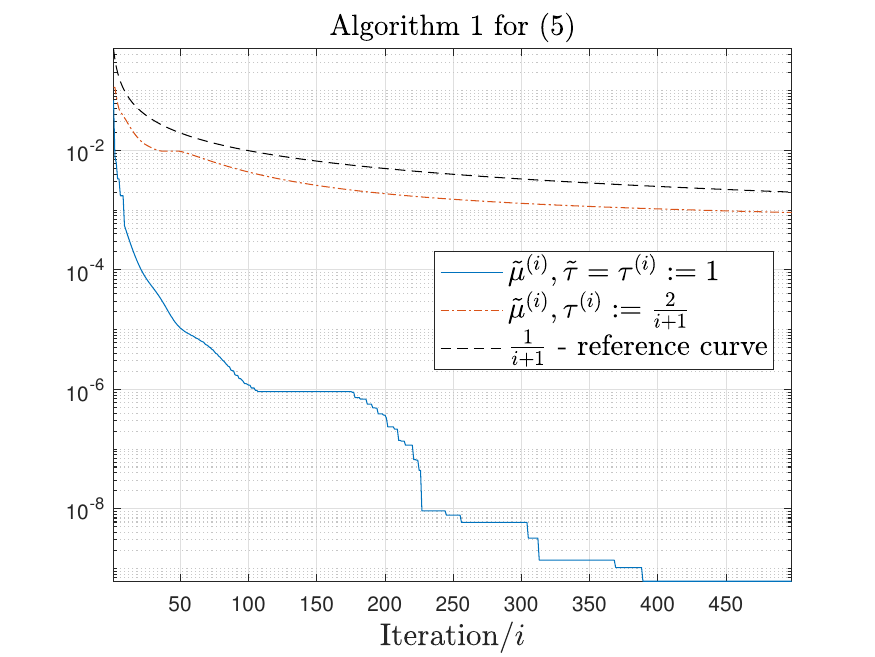}\label{fig:f1append1nest}}
  \hfill
  \subfloat[Algorithm~\ref{SCCAE-NMF} for \eqref{optiprob4}]{\includegraphics[width=0.49\textwidth]{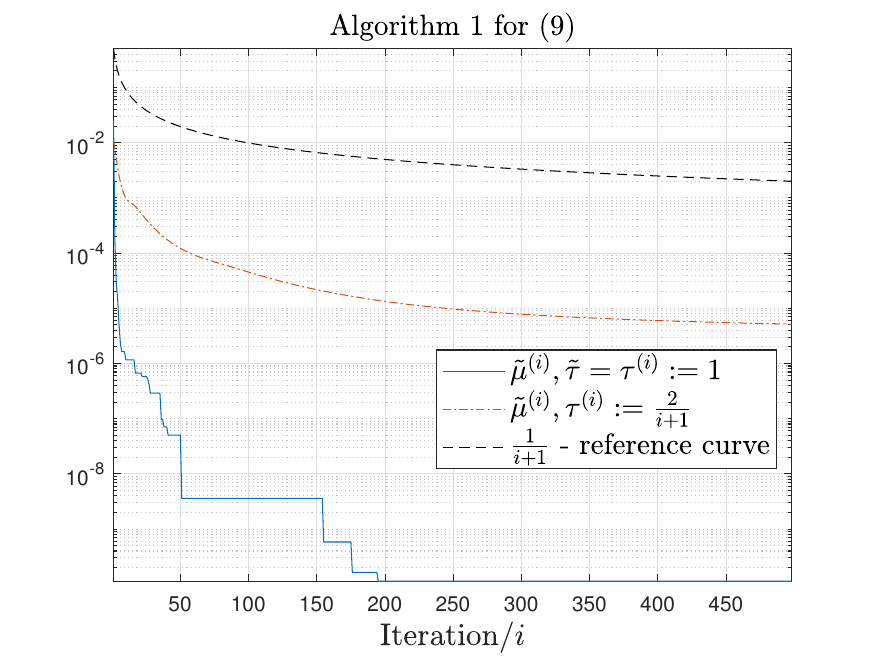}\label{fig:f2append1nest}}
  \caption{Evolution of the minimum FW gap computed along iterations by Algorithm~\ref{SCCAE-NMF} for \eqref{optiprob2} (left) and \eqref{optiprob4} (right) for the matrix ${V_{a=4}}$.}\label{fig:emprivalappendix2}
\end{figure}

\end{document}